\documentclass[11pt,reqno]{amsart}

\usepackage[T1]{fontenc}
\usepackage{lmodern}
\usepackage{microtype}
\usepackage[margin=1.03in]{geometry}
\usepackage{amsmath,amssymb,amsfonts,mathtools}
\usepackage{amsthm}
\usepackage{booktabs}
\usepackage{enumitem}
\usepackage{xcolor}
\usepackage[colorlinks=true,
  linkcolor=blue!50!black,
  citecolor=blue!50!black,
  urlcolor=blue!50!black]{hyperref}
\usepackage[capitalize,noabbrev]{cleveref}

\allowdisplaybreaks
\numberwithin{equation}{section}
\setlist[enumerate]{leftmargin=2.4em,itemsep=0.25em,topsep=0.4em}
\setlist[itemize]{leftmargin=2em,itemsep=0.25em,topsep=0.4em}

\theoremstyle{plain}
\newtheorem{theorem}{Theorem}[section]
\newtheorem{proposition}[theorem]{Proposition}
\newtheorem{lemma}[theorem]{Lemma}

\theoremstyle{definition}
\newtheorem{definition}[theorem]{Definition}
\newtheorem{question}[theorem]{Question}
\crefname{question}{Question}{Questions}
\Crefname{question}{Question}{Questions}

\theoremstyle{remark}

\newcommand{\Ann}{\operatorname{Ann}}
\newcommand{\Supp}{\operatorname{supp}}
\newcommand{\A}{\mathbb A}
\newcommand{\N}{\mathbb N}
\newcommand{\kk}{k}
\newcommand{\m}{\mathfrak m}

\title[A Reduced Non-McCoy Ring]
{A Reduced Non-McCoy Ring with\\
the Annihilator Condition and \((A_2)\)}

\author{Viet-Hoang Tran}
\address{Department of Mathematics, National University of Singapore, Singapore 119076}
\email{hoang.tranviet@u.nus.edu}
\urladdr{https://vh-tran.github.io/}

\author{Tan M. Nguyen}
\address{Department of Mathematics, National University of Singapore, Singapore 119076}
\email{tanmn@nus.edu.sg}
\urladdr{https://tanmnguyen89.github.io/}

\date{}

\subjclass[2020]{Primary 13A15; Secondary 13A99, 13B25}
\keywords{annihilator condition, McCoy ring, zero-divisor, affine curve}

\hypersetup{
  pdftitle={A Reduced Non-McCoy Ring with the Annihilator Condition and (A2)},
  pdfauthor={Viet-Hoang Tran}
}

\begin{document}
\raggedbottom

\begin{abstract}
It has been asked whether a reduced non-McCoy ring can satisfy both
the annihilator condition and \((A_n)\) for some \(n\geq2\).
In this work, we prove that such rings exist exactly when \(n=2\),
by constructing a countable example and showing that the annihilator
condition together with \((A_3)\) forces a ring to be McCoy.
\end{abstract}

\maketitle

\section{Introduction}

All rings in this paper are commutative, nonzero, and have an identity.
The set of zero-divisors of a ring \(R\), including \(0\), is denoted by
\(Z(R)\); an element outside \(Z(R)\) is called \emph{regular}.  For a
subset \(E\subseteq R\), put
\[
  \Ann_R(E)=\{r\in R:rE=0\}.
\]
We abbreviate \(\Ann_R(\{a_1,\ldots,a_s\})\) to
\(\Ann_R(a_1,\ldots,a_s)\).
A ring is \emph{reduced} if it has no nonzero nilpotent elements.

\begin{definition}
Let \(R\) be a ring.
\begin{enumerate}[label=\textup{(\roman*)}]
\item \(R\) has the \emph{annihilator condition}, abbreviated
\emph{(a.c.)}, if for every \(a,b\in R\) there is \(c\in R\) such that
\[
  \Ann_R(a,b)=\Ann_R(c).
\]
\item For \(n\geq 1\), \(R\) has \emph{\((A_n)\)} if every ideal generated
by at most \(n\) elements and contained in \(Z(R)\) has a nonzero
annihilator.
\item \(R\) is a \emph{McCoy ring} if every finitely generated ideal
contained in \(Z(R)\) has a nonzero annihilator.
\end{enumerate}
\end{definition}

The condition \((A_1)\) is automatic: if \((a)\subseteq Z(R)\), then
\(a\) is a zero-divisor and
\(\Ann_R((a))=\Ann_R(a)\neq0\); for \(a=0\), this annihilator is \(R\).
Since every ideal generated by at most \(n\) elements is also generated
by at most \(n+1\) elements, \((A_{n+1})\) implies \((A_n)\).
By definition, \(R\) is McCoy if and only if it satisfies \((A_n)\)
for every \(n\geq1\).

McCoy proved in 1942 that a polynomial \(\sum_i a_iT^i\in R[T]\) is a zero-divisor
exactly when a nonzero element of \(R\) annihilates all its coefficients
\cite{McCoy1942}.  The annihilator condition appeared in the work of
Henriksen and Jerison on minimal prime spectra
\cite{HenriksenJerison1962,HenriksenJerison1965}.  The McCoy condition
was studied as Condition~(C) by Quentel and as Property~(A) by Huckaba
and Keller \cite{Quentel1971,Quentel1972Erratum,HuckabaKeller1979};
we use Faith's terminology \cite{Faith1991}.

Lucas showed that (a.c.) and the McCoy condition are incomparable even among
reduced rings, and that \((A_n)\) does not imply \((A_{n+1})\) for any
\(n\geq2\), again within the reduced class
\cite[Examples~2.2, 2.4, and 2.5]{Lucas1986}.  These examples lead to
the simultaneous existence question:
\begin{question}\label{question:main}
Does there exist a reduced ring \(R\) satisfying \emph{(a.c.)} and
\((A_n)\) for some integer \(n\geq2\) that is not McCoy?
\end{question}
This is listed as Problem~10 in the open-problem collection of
Cahen, Fontana, Frisch, and Glaz, and is credited there to T.~G.~Lucas
\cite[Problem~10, p.~359]{CahenFontanaFrischGlaz2014}.  Subsequent work
has studied the McCoy condition, annihilator conditions, their behavior under
extensions, and stronger finite zero-divisor conditions
\cite{HongKimLeeRyu2007,HongKimLeeNielsen2009,
BajorZiembowski2019,MalkiLouzari2024,KimZiembowski2025}.
Our main result answers the question by construction.

\begin{theorem}\label{thm:main}
There exists a countable reduced ring satisfying \emph{(a.c.)} and
\((A_2)\) but not \((A_3)\).  In particular, this ring is not McCoy.
\end{theorem}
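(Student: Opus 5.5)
Our plan is to build the ring as a subring of a product of coordinate rings of affine curves (this is what the keywords suggest), arranged so that three specific elements generate an ideal of zero-divisors with zero annihilator, while any two zero-divisors do share a nonzero annihilator.

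\textbf{Ambient ring.} Fix a countable field \(\kk\), say \(\kk=\mathbb Q\) or its algebraic closure. Take a countable family of irreducible affine curves \(C_i\) with coordinate rings \(D_i\), which are domains. Let \(R\) be a subring of \(\prod_i D_i\), of the form
\[
  R=\kk[x,y,z]+\bigoplus_i D_i ,
\]
or a countable variant of it. Then \(R\) is automatically reduced. The elements of \(R\) are ``global'' polynomials \(f\in \kk[x,y,z]\) together with finitely supported corrections. Zero-divisors are exactly the elements vanishing identically on some component: \(r\in Z(R)\) iff some coordinate \(r_i=0\), since the idempotent \(e_i\) then annihilates \(r\). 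This holds because the idempotents \(e_i\) lie in \(R\).

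\textbf{Plan of the construction.}

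1. \emph{Choosing the curves.} Choose the curves (for instance lines or conics in \(\A^3\)) so that \(x,y,z\) have the following property: each component \(C_i\) lies in exactly one of the coordinate planes \(\{x=0\},\{y=0\},\{z=0\}\), and every curve in the union of these planes that we need appears. Then \((x,y,z)\subseteq Z(R)\). We also want \(\Ann(x,y,z)=0\), which requires that no element of \(R\) vanish at all three of \(x,y,z\) simultaneously. This will be arranged by making the family of components dense, for example by enumerating all lines through the origin in the coordinate planes. Then any element killing \(x,y,z\) must be supported on finitely many components, none of which it can be supported on. Hence \((A_3)\) fails.

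2. \emph{Proving \((A_2)\).} Given \(a,b\in Z(R)\), we must produce a common nonzero annihilator. Write \(a=f+\alpha\) and \(b=g+\beta\) with global parts \(f,g\) and finite corrections \(\alpha,\beta\). If \(f\) or \(g\) is zero, the finite support makes this easy: an idempotent on a component outside the supports works. Otherwise, \(f\) vanishes on infinitely many components, and so does \(g\). The key geometric lemma is that two nonzero polynomials vanishing on infinitely many of our components must vanish simultaneously on some common component outside the finite supports. The idempotent of that component then annihilates both \(a\) and \(b\). Proving this requires choosing the family so that, for two planes, a polynomial vanishing on infinitely many lines of a pencil vanishes on the whole plane. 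Then \(f\) and \(g\) each contain a coordinate-plane factor. With only two elements and three planes, we need one plane shared by both, and this is not automatic. So the design must be refined, most likely by adding enough components (such as curves inside the pairwise intersections and generic curves) so that pigeonhole works for two elements but not for three. I expect this balancing act to be the main obstacle, and I would first try the variant where the global part ranges over a ring whose ``zero-divisor loci'' form a configuration in which any two meet but three do not (e.g.\ three planes through a common line replaced by a combinatorial design).

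3. \emph{Proving (a.c.).} Given \(a,b\), we have \(\Ann(a,b)=\Ann(a)\cap\Ann(b)\), and each annihilator is determined by the set of components where the element vanishes together with its finite data. We seek \(c\) with zero set equal to \(Z(a)\cap Z(b)\). The natural choice is \(c=a^2+b^2\) when \(\kk\) is formally real (so take \(\kk=\mathbb Q\)). On each domain component \(D_i\), the element \(a_i^2+b_i^2\) vanishes iff both \(a_i\) and \(b_i\) vanish, provided the \(D_i\) embed in real function fields. This is arranged by using real curves. Thus \(\Ann(c)=\Ann(a,b)\), because annihilators in \(R\) are computed componentwise on the \(D_i\) together with the global part.

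Finally, countability is clear from the countable index set and \(\kk\) countable. Non-McCoy follows from the failure of \((A_3)\) established in step 1.
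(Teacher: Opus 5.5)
There is a genuine gap: Steps 1 and 3 of your plan contradict each other, and Step 2 is left unresolved for a design that cannot satisfy $(A_2)$.

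\textbf{Step 3 is incompatible with Step 1.} If every component has a formally real function field, then $c=a^2+b^2$ does give $\Ann_R(c)=\Ann_R(a,b)$. But $c$ lies in the ideal $(a,b)$. Apply the same computation to $x^2+y^2+z^2=x\cdot x+y\cdot y+z\cdot z\in(x,y,z)R$. It vanishes identically on a component only where $x,y,z$ all do, and Step 1 requires that there be no such component. Hence $\Ann_R(x^2+y^2+z^2)=\Ann_R(x,y,z)=0$, so $(x,y,z)R$ contains a regular element and $(A_3)$ does not fail there.

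More generally, suppose every pair admits $c\in(a,b)$ with $\Ann_R(c)=\Ann_R(a,b)$. Induction on the number of generators then gives every finitely generated ideal $I$ an element $c\in I$ with $\Ann_R(c)=\Ann_R(I)$, so the ring is McCoy. In particular, every subring of a product of formally real fields is McCoy. The (a.c.) witness must therefore in general lie outside $(a,b)$; compare the remark after \cref{prop:threshold}. The paper takes $t=\gcd(f,g)\,q$, where $q$ interpolates: it vanishes exactly at the finitely many points of $X\cap W(f,g)$ and is $1$ at $o$. All later points of $X$ are kept off $W(f,g)\cup V(q)$.

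\textbf{Step 2 is misformulated and fails for your design.} $(A_2)$ does not require two zero-divisors to share a nonzero annihilator; $e_i$ and $1-e_i$ do not. It requires that $\Ann_R(a,b)=0$ force a regular element into $(a,b)$. Your plan has no mechanism for this. The paper gets it from (T5), a combination $uf+vg$ zero-free on $X$, together with the pointwise corrections in \cref{prop:dictionary}(c).

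Moreover, once $(x,y,z)R\subseteq Z(R)$, $(A_2)$ forces every pair $f,g\in\m$ to vanish identically on a common component. For $f=x-y$ and $g=y-z$ the common zero set is the line $x=y=z$, which lies in no coordinate plane. So $\Ann_R(f,g)=0$ while $(f,g)R\subseteq Z(R)$, and the coordinate-plane design violates $(A_2)$.

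Step 1 is also incomplete. Membership in $Z(R)$ must be checked for every $xr_1+yr_2+zr_3$ plus finite corrections, not only for the generators; for example, $x+y^2+z^3$ vanishes on no line through $o$. Each component must also be repeated infinitely often to survive finite corrections.

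\textbf{The missing idea is a \emph{thin} index set.} It must meet every curve through $o$, so that all pairs in $\m$ have common zeros, as in (T2)--(T3). Yet it must meet each curve in only finitely many places, so that (a.c.) and (T5) can be imposed by interpolation at finitely many points and by forbidding closed sets to all later points.

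With curves as components this balance breaks down. Suppose all curves through $o$ are components. Then any $t$ vanishing on the $z$-axis vanishes on the surface $V(t)\ni o$, which contains other curves through $o$ (cut it with a plane through $o$ not containing the $z$-axis). So (a.c.) fails for the pair $x,y$. The paper avoids this by using points $p_n\in C_n$ chosen through the diagonal recursion of \cref{prop:thin}.
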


The value \(n=2\) is sharp: \cref{prop:threshold} shows that (a.c.)
and \((A_3)\) imply that a ring is McCoy, without assuming reducedness.
In \cref{tab:threshold}, the first row is definitional; the remaining
rows are the construction and threshold results proved in this paper.

\begin{table}[htbp]
\centering
\caption{The conditions \((A_n)\) and McCoy rings.}
\label{tab:threshold}
\footnotesize
\renewcommand{\arraystretch}{1.15}
\begin{tabular}{@{}l@{\hspace{1em}}l@{}}
\toprule
Conditions on \(R\) & Conclusion\\
\midrule
\((A_n)\) for every \(n\geq1\)
& \(R\) is McCoy, and conversely (definition).\\
\addlinespace[2pt]
Reduced, (a.c.), and \((A_2)\)
& A countable non-McCoy example exists (\cref{thm:main}).\\
\addlinespace[2pt]
(a.c.) and \((A_n)\) for fixed \(n\geq3\)
& \(R\) is necessarily McCoy (\cref{prop:threshold}).\\
\bottomrule
\end{tabular}
\end{table}

Lucas constructed a reduced ring satisfying \((A_3)\) but not
\((A_4)\) \cite[Example~2.5]{Lucas1986}.  Since every McCoy ring
satisfies \((A_4)\), this example shows that (a.c.) cannot be omitted
from \cref{prop:threshold}.  Reducedness is not used in that
proposition, but is required in \cref{question:main}.  The ring in
\cref{thm:main} is reduced because it is a subring of a product of fields.

After proving \cref{prop:threshold}, we translate annihilators into
zero sets of functions and construct a suitable countable set in
punctured affine \(3\)-space.  Infinite repetition of its points and
adjunction of finite-support functions then give the required ring.

\section{The three-generator threshold}\label{sec:threshold}

\begin{proposition}\label{prop:threshold}
Every ring satisfying \emph{(a.c.)} and \((A_3)\) is McCoy.
Consequently, for each fixed \(n\geq3\), a ring with \emph{(a.c.)}
satisfies \((A_n)\) if and only if it is McCoy.
\end{proposition}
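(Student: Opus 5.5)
We argue by induction on the number of generators, using (a.c.) to collapse two generators into one while keeping control of the annihilator. Let \(R\) satisfy (a.c.) and \((A_3)\), and let \(I=(a_1,\ldots,a_m)\subseteq Z(R)\) with \(m\geq 3\). We show \(\Ann_R(I)\neq0\) by induction on \(m\); the cases \(m\leq 3\) are exactly \((A_3)\).

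For the inductive step, take \(m\geq4\) and assume the claim for ideals generated by at most \(m-1\) elements. By (a.c.) choose \(c\in R\) with \(\Ann_R(a_1,a_2)=\Ann_R(c)\). A direct check gives
\[
\Ann_R(a_1,\ldots,a_m)=\Ann_R(a_1,a_2)\cap\Ann_R(a_3,\ldots,a_m)=\Ann_R(c,a_3,\ldots,a_m),
\]
so it suffices to show that \(J=(c,a_3,\ldots,a_m)\), which has \(m-1\) generators, lies in \(Z(R)\).

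The main obstacle is that \(c\) need not lie in \(I\), so \(J\subseteq Z(R)\) is not automatic. The plan is to use \((A_3)\) with the extra room it provides. An element of \(J\) has the form \(x=rc+y\) with \(y\in(a_3,\ldots,a_m)\). Since \(y\in I\), the ideal \((a_1,a_2,y)\subseteq I\subseteq Z(R)\) has at most three generators, so \((A_3)\) gives \(0\neq s\) with \(sa_1=sa_2=sy=0\). Then \(s\in\Ann_R(a_1,a_2)=\Ann_R(c)\), hence \(sc=0\) and
\[
sx=rsc+sy=0.
\]
Thus \(x\in Z(R)\), so \(J\subseteq Z(R)\). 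This is exactly where three generators are needed: two to pin down \(c\) through \(\Ann_R(a_1,a_2)\) and one for the arbitrary remaining element \(y\). The induction hypothesis applied to \(J\) now yields \(\Ann_R(J)\neq0\), and therefore \(\Ann_R(I)\neq0\).

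Thus \(R\) satisfies \((A_m)\) for all \(m\), so it is McCoy. For the ``consequently'' part, fix \(n\geq3\). If \(R\) satisfies (a.c.) and \((A_n)\), then \((A_n)\) implies \((A_3)\) by the monotonicity noted in the introduction, so \(R\) is McCoy by the first part. Conversely, every McCoy ring satisfies \((A_n)\) by definition.
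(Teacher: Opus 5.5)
Your proof is correct and follows essentially the same argument as the paper: (a.c.) replaces \(a_1,a_2\) by a single element \(c\) with \(\Ann_R(c)=\Ann_R(a_1,a_2)\), and \((A_3)\) applied to \((a_1,a_2,y)\) shows that the shorter ideal still lies in \(Z(R)\). The only difference is cosmetic: you induct on the number of generators, whereas the paper argues from a counterexample of minimal length.
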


\begin{proof}
Let \(R\) satisfy (a.c.) and \((A_3)\).  Suppose that \(R\) is not
McCoy.  Among all finite lists generating an ideal contained in
\(Z(R)\) with zero annihilator, choose one of least length, say
\[
  I=(a_1,\ldots,a_m)\subseteq Z(R),
  \qquad \Ann_R(I)=0.
\]
Such a list exists because \(R\) is not McCoy, and \((A_3)\)
forces \(m\geq4\).  By (a.c.), choose \(t\in R\) with
\(\Ann_R(t)=\Ann_R(a_1,a_2)\), and set
\[
  J=(t,a_3,\ldots,a_m).
\]
Then
\[
  \Ann_R(J)
  =\Ann_R(t)\cap\bigcap_{i=3}^{m}\Ann_R(a_i)
  =\Ann_R(I)=0.
\]

We show that \(J\subseteq Z(R)\).  Given \(v\in J\), write
\(v=rt+q\), where \(r\in R\) and \(q\in(a_3,\ldots,a_m)\).
The three-generated ideal
\[
  K=(a_1,a_2,q)\subseteq I\subseteq Z(R)
\]
has nonzero annihilator by \((A_3)\).  Choose
\(0\neq u\in\Ann_R(K)\).  Then
\(ua_1=ua_2=uq=0\); the choice of \(t\) gives \(ut=0\), and
\[
  uv=r(ut)+uq=0.
\]
Hence \(v\in Z(R)\).  The element \(u\) may depend on \(v\), which
is sufficient to prove \(J\subseteq Z(R)\).  Since \(J\) has zero
annihilator and at most \(m-1\) generators, this contradicts the
minimality of \(m\).  Therefore \(R\) is McCoy.

For any fixed \(n\geq3\), \((A_n)\) implies \((A_3)\), so the same
conclusion holds.  The reverse implication follows directly from the
definition of a McCoy ring.
\end{proof}

The role of (a.c.) is to replace two generators by one while preserving
their common annihilator.  The additional use of \((A_3)\) proves that
the replacement ideal still consists entirely of zero-divisors;
neither \(t\in I\) nor \(J\subseteq I\) is assumed.  This step tests the
three-generated ideals \((a_1,a_2,q)\), so \((A_2)\) does not supply
the required conclusion.  The remaining sections construct the
reduced example showing that this distinction is sharp.

\section{Function rings and annihilators}\label{sec:functions}

Let \(X\) be a nonempty set, let \(\kk\) be a field, and let
\(B\subseteq\kk^X\) be a unital \(\kk\)-subalgebra.  Put
\(\Omega=X\times\N\), where \(\N=\{1,2,\ldots\}\).  For \(f\in B\),
write \(\Delta f\) for the function on \(\Omega\) with
\((\Delta f)(p,j)=f(p)\).  Define
\begin{equation}\label{eq:trace-ring}
  D=\kk^{(\Omega)}
   =\{d\in\kk^\Omega:\Supp(d)\text{ is finite}\},
  \qquad R=\Delta B+D\subseteq\kk^\Omega,
\end{equation}
where \(\Supp(d)=\{\omega:d(\omega)\neq0\}\).  Since \(D\) is an ideal
of \(\kk^\Omega\), \(R\) is a unital subring.  For a function \(f\) on
a set \(Y\), write \(Z_Y(f)=\{p\in Y:f(p)=0\}\).

\begin{lemma}\label{prop:dictionary}
The ring \(R\) in \eqref{eq:trace-ring} has the following properties.
\begin{enumerate}[label=\textup{(\alph*)}]
\item \(R\) is reduced, and for every finite tuple \(r_1,\ldots,r_s\in R\),
\begin{equation}\label{eq:ann-dictionary}
 \Ann_R(r_1,\ldots,r_s)
 =
 \{c\in R:\Supp(c)\subseteq\textstyle\bigcap_{i=1}^{s}Z_\Omega(r_i)\}.
\end{equation}
This annihilator is nonzero exactly when the tuple has a common
zero in \(\Omega\).  Equal common zero sets give equal annihilators,
and an element of \(R\) is regular exactly when it is zero-free on
\(\Omega\).
\item \(R\) satisfies \emph{(a.c.)} if for every \(f,g\in B\) there is
\(t\in B\) such that
\begin{equation}\label{eq:binary-coder}
  Z_X(t)=Z_X(f)\cap Z_X(g).
\end{equation}
\item \(R\) satisfies \((A_2)\) if every \(f,g\in B\) with no common
zero on \(X\) admit \(u,v\in B\) such that \(uf+vg\) is zero-free on \(X\).
\end{enumerate}
\end{lemma}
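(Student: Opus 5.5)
The plan is to work pointwise in \(\kk^\Omega\), which is a product of fields, and to use a single structural fact about elements of \(R\). Reducedness is immediate, since \(R\subseteq\kk^\Omega\) and a nilpotent function vanishes at every point. For \eqref{eq:ann-dictionary}, note that \(cr_i=0\) in \(\kk^\Omega\) holds exactly when \(c\) vanishes off \(Z_\Omega(r_i)\). So \(c\in\Ann_R(r_1,\ldots,r_s)\) if and only if \(\Supp(c)\subseteq\bigcap_i Z_\Omega(r_i)\). The two directions of the nonzero criterion are as follows. If \(\omega\) is a common zero, the indicator \(e_\omega\in D\subseteq R\) is a nonzero annihilator. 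Conversely, a nonzero \(c\) has nonempty support inside the common zero set. Equal zero sets give equal annihilators because the right side of \eqref{eq:ann-dictionary} depends only on \(\bigcap_i Z_\Omega(r_i)\). Taking \(s=1\) shows that \(r\) is regular if and only if \(Z_\Omega(r)=\emptyset\).

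The structural fact is this: every \(r\in R\) has the form \(r=\Delta f+d\) with \(d\) finitely supported. Outside the finite set \(S=\Supp(d)\), the zero set of \(r\) is \(Z_X(f)\times\N\) minus \(S\). In particular, \(r\) agrees with \(\Delta f\) on all but finitely many points, and each fibre \(\{p\}\times\N\) is infinite.

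For (b), take \(a=\Delta f+d\) and \(b=\Delta g+d'\), and choose \(t\in B\) as in \eqref{eq:binary-coder}. The element \(\Delta t\) already has the right zero set off the finite set \(F=\Supp(d)\cup\Supp(d')\). I will correct it on \(F\) with a finitely supported \(e\). At each \(\omega\in F\), set \(e(\omega)=-t(\omega)\) if \(\omega\) is a common zero of \(a,b\). Otherwise choose \(e(\omega)\) so that \(t(\omega)+e(\omega)\ne0\), for instance \(e(\omega)=1-t(\omega)\). Then \(c=\Delta t+e\in R\) satisfies \(Z_\Omega(c)=Z_\Omega(a)\cap Z_\Omega(b)\), and part (a) gives \(\Ann_R(c)=\Ann_R(a,b)\).

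For (c), let \((a,b)\subseteq Z(R)\) and suppose for contradiction that \(a,b\) have no common zero on \(\Omega\). Write \(a=\Delta f+d\) and \(b=\Delta g+d'\), with \(F\) as above. Then \(f,g\) have no common zero on \(X\): a common zero \(p\) would give a common zero \((p,j)\) of \(a,b\) for any \(j\) with \((p,j)\notin F\). By hypothesis there are \(u,v\in B\) with \(h=uf+vg\) zero-free on \(X\). The element \(\Delta u\,a+\Delta v\,b\) equals \(\Delta h\) off \(F\), so it is nonzero there. It remains to fix the finitely many points of \(F\) while staying in the ideal. Here I add \(e_\omega a\) or \(e_\omega b\) with suitable scalars, which is possible since each \(\omega\in F\) is a non-zero of \(a\) or of \(b\). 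For example, replace the coefficient \(\Delta u\) by \(\Delta u+\sum\lambda_\omega e_\omega\), choosing \(\lambda_\omega\) so that the value at \(\omega\) becomes nonzero. This produces a regular element of \((a,b)\), contradicting \((a,b)\subseteq Z(R)\). Hence \(a,b\) have a common zero, and by (a) their ideal has nonzero annihilator.

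The main obstacle is this last local correction on \(F\), together with the analogous one in (b). The point to get right is that finite-support modifications stay inside \(R\) and inside the ideal. In (c) this holds because \(e_\omega\in R\) and \(e_\omega a\) lies in \((a,b)\). The infinitude of \(\N\) is what guarantees that the perturbation \(d\) cannot hide a common zero of \(f,g\).
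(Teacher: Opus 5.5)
Your proof is correct and follows the paper's argument essentially step by step. It uses the pointwise annihilator formula, corrects \(\Delta t\) on the finite set \(F\) for (a.c.), and for \((A_2)\) forms \(\Delta u\,a+\Delta v\,b\) and removes its finitely many zeros by adding finitely supported multiples of \(a\) or \(b\), which is the paper's \(c_1r+c_2s\). The one imprecision is your ``for example'' that modifies only the coefficient \(\Delta u\): this fails at points \(\omega\) where \(a(\omega)=0\), and there you must add a multiple of \(e_\omega b\) instead, as your preceding sentence already states.
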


\begin{proof}
A subring of a product of fields is reduced.  Coordinatewise
multiplication gives \eqref{eq:ann-dictionary}.  If a tuple has a common
zero \(\omega\), its annihilator contains the nonzero idempotent
\(\delta_\omega\in D\) that is \(1\) at \(\omega\) and \(0\) elsewhere.
If the common zero set is empty, \eqref{eq:ann-dictionary} makes the
annihilator zero.  The remaining assertions of (a) follow from the
same formula.

For (b), write \(r=\Delta f+d_1\) and \(s=\Delta g+d_2\), where
\(d_1,d_2\in D\), and choose \(t\) as in \eqref{eq:binary-coder}.
Let \(F=\Supp(d_1)\cup\Supp(d_2)\).  Outside \(F\), \(\Delta t\)
vanishes exactly where both \(r\) and \(s\) vanish.  Choose \(e\in D\),
supported on \(F\), so that
\[
 (\Delta t+e)(\omega)=
 \begin{cases}
  0,&r(\omega)=s(\omega)=0,\\
  1,&\text{otherwise}
 \end{cases}
 \qquad(\omega\in F).
\]
Then \(Z_\Omega(\Delta t+e)=Z_\Omega(r)\cap Z_\Omega(s)\), and (a)
gives \(\Ann_R(\Delta t+e)=\Ann_R(r,s)\).

For (c), we use the contrapositive formulation of \((A_2)\):
\begin{equation}\label{eq:A2-contrapositive}
 \Ann_R(r,s)=0
 \quad\Longrightarrow\quad
 (r,s)\text{ contains a regular element}.
\end{equation}
This is equivalent to the definition because an ideal is not contained
in \(Z(R)\) precisely when it contains a regular element; principal
ideals are covered by taking \(s=0\).

Suppose that \(r=\Delta f+d_1\) and \(s=\Delta g+d_2\) have zero common
annihilator.  By (a), they have no common zero in \(\Omega\).  The
functions \(f,g\) have no common zero on \(X\): such a zero \(p\)
would give a common zero of \(r,s\) at any coordinate of the infinite
fiber \(\{p\}\times\N\) outside
\(\Supp(d_1)\cup\Supp(d_2)\).  Choose \(u,v\in B\) so that
\(h=uf+vg\) is zero-free on \(X\).  Then
\[
  w_0=\Delta u\,r+\Delta v\,s=\Delta h+e
  \qquad\text{for some }e\in D.
\]
Its zero set \(E\) is contained in the finite set \(\Supp(e)\).
At each \(\omega\in E\), at least one of \(r(\omega),s(\omega)\) is
nonzero.  Define \(c_1,c_2\in D\), supported on \(E\), by
\[
 (c_1(\omega),c_2(\omega))=
 \begin{cases}
  (r(\omega)^{-1},0),&r(\omega)\neq0,\\
  (0,s(\omega)^{-1}),&r(\omega)=0.
 \end{cases}
\]
The element \(w=w_0+c_1r+c_2s\in(r,s)\) is \(1\) on \(E\) and
agrees with \(w_0\) elsewhere.  It is therefore zero-free, hence regular
by (a), proving \eqref{eq:A2-contrapositive}.
\end{proof}

The infinite fibers ensure that common zeros survive finitely supported
perturbations, while the functions in \(D\) allow corrections at finitely
many coordinates.  These features will also yield the three-generated
ideal in the final construction.

\section{A countable set in punctured affine space}

For the rest of the paper, fix the countable algebraically closed field
\[
  \kk=\overline{\mathbb Q},
  \qquad
  A=\kk[x,y,z],
  \qquad
  \m=(x,y,z),
\]
and let \(o=(0,0,0)\in\A^3_\kk\).  By a \emph{curve} we mean an
irreducible closed subvariety of \(\A^3_\kk\) of dimension one.

\begin{lemma}\label{lem:elementary-geometry}
The following facts hold.
\begin{enumerate}[label=\textup{(\roman*)}]
\item The sets of polynomials in \(A\), ordered pairs in \(A^2\), and
curves through \(o\) are countable.
\item A proper closed subset of an irreducible affine curve is finite,
and an affine curve over \(\kk\) has infinitely many \(\kk\)-points.
\item For every \(0\neq a\in A\), infinitely many lines through \(o\)
are not contained in \(V(a)\).
\item If \(f,g\in A\) are nonzero and have greatest common divisor
\(d\), write \(f=df_0\), \(g=dg_0\).  Then
\[
 W(f,g)=V(f_0,g_0)
 \tag{4.1}\label{eq:residual-locus}
\]
is either empty or a finite union of irreducible affine curves.
\end{enumerate}
\end{lemma}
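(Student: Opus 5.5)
The four items are standard facts of affine algebraic geometry over the countable algebraically closed field \(\kk=\overline{\mathbb Q}\), so I will treat them one at a time. No earlier result of the paper is needed.

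For (i), \(A\) is a countable union over degrees \(d\) of the finite-dimensional spaces of polynomials of degree at most \(d\). Each such space is in bijection with a finite power of the countable field \(\kk\), so \(A\) and \(A^2\) are countable. Every curve is \(V(P)\) for a prime ideal \(P\). By the Hilbert basis theorem \(P\) is generated by a finite list of polynomials, and there are only countably many finite lists from \(A\). Hence there are countably many curves, and in particular countably many through \(o\).

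For (ii), let \(C\) be an irreducible affine curve. A proper closed subset of \(C\) has dimension zero, since in a Noetherian irreducible space of dimension one a proper closed irreducible subset has strictly smaller dimension. It therefore has finitely many irreducible components, each a point, so it is finite. For infinitude, note that \(C\) is one-dimensional, so some coordinate function \(x_i\) is nonconstant on \(C\). Otherwise \(C\) would be a point, since \(\kk\) is algebraically closed and the Nullstellensatz applies. The coordinate ring \(\kk[C]\) is then integral over \(\kk[x_i]\) by Noether normalization, after a linear change of coordinates if needed. Thus the projection \(C\to\A^1\) is finite and dominant, hence surjective. Since \(\A^1(\kk)=\kk\) is infinite, \(C\) has infinitely many \(\kk\)-points.

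For (iii), a line through \(o\) is \(\{\lambda v:\lambda\in\kk\}\) for a direction \(0\neq v\in\kk^3\). It lies in \(V(a)\) iff \(a(\lambda v)=0\) for all \(\lambda\). Write \(a=\sum_d a_d\) with \(a_d\) homogeneous of degree \(d\); then \(a(\lambda v)=\sum_d\lambda^d a_d(v)\). Since \(\kk\) is infinite, this vanishes identically iff \(a_d(v)=0\) for every \(d\). Fix some \(a_d\neq0\). The directions in \(V(a_d)\) form a proper closed subset of \(\A^3\) minus the origin, and its image in \(\mathbb P^2\) is a proper closed subset. Proper closed subsets of \(\mathbb P^2(\kk)\) miss infinitely many points. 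For instance, pick a line in \(\mathbb P^2\) not contained in \(V(a_d)\); it meets \(V(a_d)\) in finitely many points but has infinitely many \(\kk\)-points.

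For (iv), which is the only item requiring care, \(f_0\) and \(g_0\) are coprime in the UFD \(A\). If either is a unit, then \(W(f,g)=\emptyset\). Otherwise, every irreducible component of \(V(f_0)\) has dimension \(2\), by Krull's principal ideal theorem. The intersection \(V(f_0)\cap V(g_0)\) cannot contain such a component \(V(p)\), with \(p\) a prime factor of \(f_0\). Indeed, that would force \(g_0\in\sqrt{(p)}=(p)\), contradicting coprimality. So each component of \(W\) has dimension at most \(1\). By Krull's theorem applied to \(g_0\) on each surface \(V(p)\), each nonempty component has dimension at least \(1\). Thus \(W\) is empty or a finite union of irreducible curves, the finiteness coming from Noetherianity.

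The main obstacle is exactly this last dimension count. It is where coprimality is used and where one must rule out isolated points. I would handle it by the principal ideal theorem in the integral domain \(A/(p)\), which has dimension \(2\): there every minimal prime over \(g_0\) has height \(1\), hence gives a component of dimension exactly \(1\).
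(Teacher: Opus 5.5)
Your proposal is correct, and for (i)--(iii) it follows the paper's proof: countably many finite generating lists by Hilbert's basis theorem, a dimension count plus Noether normalization for (ii), and a nonzero homogeneous component $a_d$ of $a$ together with the infinitude of the open set $\{a_d\neq0\}$ in $\mathbb P^2$ for (iii). The real difference is in (iv). The paper applies Krull's height theorem directly to the two-generated ideal $(f_0,g_0)$ in $A$. A minimal prime has height at most $2$. It is not $0$, since $(f_0,g_0)\neq0$. It is not $1$, since a height-one prime of the UFD $A$ is principal and its generator would be a common divisor of $f_0,g_0$. The dimension $3-2=1$ of every component then follows at once. You instead split $V(f_0)$ into the surfaces $V(p)$ for the prime factors $p$ of $f_0$. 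You get $\dim\le1$ from $g_0\notin(p)$, and $\dim\ge1$ from the principal ideal theorem in the two-dimensional domain $A/(p)$. The ingredients are the same, only redistributed: coprimality excludes two-dimensional pieces, and Krull's theorem excludes isolated points. Your route needs only the one-element principal ideal theorem and shows $W$ as the curves cut out by $g_0$ on the components of the surface $V(f_0)$. The cost is passing to each quotient $A/(p)$ and noting that every component of $W$ is a component of some $V(p)\cap V(g_0)$. The paper's argument is shorter and never leaves $A$. One wording point in (ii): a nonconstant coordinate does not by itself make $k[C]$ integral over $k[x_i]$. For $C=V(xy-1,z)$ one has $k[C]=k[x,x^{-1}]$, and projection to the $x$-axis misses $0$. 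So the function to project along is the linear form produced by Noether normalization, as in the paper, and your phrase ``after a linear change of coordinates'' is what makes that step right.
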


\begin{proof}
The field \(\overline{\mathbb Q}\) is countable, hence so are \(A\) and
\(A^2\).  Since \(A\) is Noetherian, every ideal is finitely generated;
there are consequently only countably many ideals and only countably
many curves through \(o\).  This proves (i).

A proper closed subset of an irreducible one-dimensional Noetherian
space has dimension zero and therefore consists of finitely many
closed points.  Noether normalization gives a finite dominant morphism
from any affine curve to \(\A^1_\kk\).  Its image is both dense and
closed, hence it is surjective.  Every fiber over a \(\kk\)-point
contains a closed point, whose residue field is \(\kk\) because \(\kk\)
is algebraically closed.  As \(\kk\) is infinite, the curve has
infinitely many \(\kk\)-points.  This proves (ii).

Write \(a=\sum_{r=0}^N a_r\), where the \(a_r\) are homogeneous, and
choose \(r\) with \(a_r\neq0\).  Every direction
\([v]\in\mathbb P^2(\kk)\) satisfying \(a_r(v)\neq0\) determines a
line on which \(a\) is not identically zero.  These directions form a
nonempty Zariski-open subset of \(\mathbb P^2_\kk\), hence an infinite
set.  This proves (iii).

For (iv), \(A\) is a unique factorization domain and
\(\gcd(f_0,g_0)=1\).  Let \(\mathfrak p\) be a prime minimal over
\((f_0,g_0)\).  Krull's height theorem gives
\(\operatorname{ht}\mathfrak p\leq2\).  Height zero is impossible when
the common zero set is nonempty, because \(A\) is a domain and
\((f_0,g_0)\neq0\).  A height-one prime in the UFD \(A\) is generated
by an irreducible element; if it contained both \(f_0\) and \(g_0\),
that element would be a common divisor.  Thus every minimal prime has
height two.  The irreducible components of \(W(f,g)\) consequently
have dimension \(3-2=1\), and there are finitely many of them.
\end{proof}

\begin{proposition}\label{prop:thin}
There is a countable set
\[
  X=\{p_1,p_2,\ldots\}\subseteq\A^3_\kk\setminus\{o\}
\]
with the following properties.
\begin{enumerate}[label=\textnormal{(T\arabic*)},ref=T\arabic*]
\item\label{T:dense}
\(X\) is Zariski dense in \(\A^3_\kk\).
\item\label{T:thin}
Every curve through \(o\) meets \(X\) in a nonempty finite set.
\item\label{T:pair-hit}
If \(f,g\in\m\), then \(f\) and \(g\) have a common zero in \(X\).
\item\label{T:coder}
For every \(f,g\in A\), there is \(t\in A\) such that
\[
  Z_X(t)=Z_X(f)\cap Z_X(g).
  \tag{4.2}\label{eq:T-coder}
\]
\item\label{T:escape}
If \(f,g\in A\) have no common zero on \(X\), there are \(u,v\in A\)
such that \(uf+vg\) is zero-free on \(X\).
\end{enumerate}
\end{proposition}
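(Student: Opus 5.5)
The plan is to build \(X\) by a countable diagonal construction. We enumerate the curves through \(o\) as \(C_1,C_2,\dots\) and the pairs in \(A^2\) as \((f_1,g_1),(f_2,g_2),\dots\); both lists are countable by \cref{lem:elementary-geometry}(i). We also enumerate the nonzero polynomials as \(a_1,a_2,\dots\). At stage \(n\) we extend a finite set \(X_{n-1}\subseteq\A^3_\kk\setminus\{o\}\) to \(X_n\), and we fix once and for all a \emph{coder} \(t_n\) for the pair \((f_n,g_n)\). Write \(d_n=\gcd(f_n,g_n)\), \(f_n=d_nf_{n0}\), \(g_n=d_ng_{n0}\), and let \(W_n=W(f_n,g_n)\) as in \eqref{eq:residual-locus}. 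The coder will have the form
\[
 t_n=d_n\,(f_{n0}+\lambda_n g_{n0}),\qquad \lambda_n\in\kk .
\]
Put \(H_n=V(f_{n0}+\lambda_ng_{n0})\supseteq W_n\). The invariant we maintain is that every point added at a stage \(m\ge n\) lies off \(H_n\setminus W_n\). Given that invariant, \(Z_X(t_n)=(Z_X(d_n))\cup(W_n\cap X)\), and this equals \(Z_X(f_n)\cap Z_X(g_n)\), which gives \eqref{T:coder}.

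For \eqref{T:escape}, suppose \(f,g\) have no common zero on \(X\). Then \(d\) is zero-free on \(X\) and \(W\cap X=\emptyset\), so the same coder \(t=f_0\cdot d+\lambda g_0\cdot d\) has the form \(uf+vg\) with \(u=1\), \(v=\lambda\), and it is zero-free on \(X\).

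At stage \(n\) we add three kinds of points. First, a point of \(C_n\setminus\{o\}\) that avoids \(C_1,\dots,C_{n-1}\) and each \(H_j\setminus W_j\) for \(j\le n\). Second, a point with \(a_n\neq0\) that avoids all \(C_i\) with \(i\le n\) and all \(H_j\setminus W_j\); this is possible because the union of finitely many curves and proper hypersurfaces is a proper closed set, and it gives \eqref{T:dense}. Third, we choose \(\lambda_n\) so that \(H_n\setminus W_n\) contains none of the finitely many points of \(X_{n-1}\). Each point \(p\notin W_n\) excludes at most one value of \(\lambda\), and \(\kk\) is infinite, so this choice exists. The points of \(C_i\) are added only at stage \(i\); later points avoid \(C_i\), and earlier stages add finitely many points. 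Hence \(C_i\cap X\) is finite and nonempty, which is \eqref{T:thin}.

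Property \eqref{T:pair-hit} then follows from \eqref{T:thin}. If \(f,g\in\m\), then \(V(f,g)\) contains \(o\) and has every component of dimension at least \(1\) by Krull. So it contains a curve through \(o\), and that curve meets \(X\).

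The main obstacle is the first kind of point. When \(C_n\subseteq H_j\) but \(C_n\not\subseteq W_j\) for some \(j<n\), all but finitely many points of \(C_n\) lie in \(H_j\setminus W_j\), and the invariant would block \(C_n\) entirely. A fixed \(\lambda_j\) cannot be chosen to avoid all future curves, because there are countably many curves and \(\kk\) is countable. I would first try to use the point \(o\). If \((f_{j0}(o),g_{j0}(o))\neq(0,0)\), choose \(\lambda_j\) with \((f_{j0}+\lambda_jg_{j0})(o)\neq0\); then no curve through \(o\) lies in \(H_j\). If \(o\in W_j\), replace the scalar \(\lambda_j\) by a polynomial multiplier \(v_j\in\m^N\) chosen so that the tangent cone of \(f_{j0}+v_jg_{j0}\) at \(o\) is that of \(f_{j0}\) (or of \(g_{j0}\)). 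Then the curves through \(o\) inside \(H_j\) are controlled by the finitely many tangent directions shared with \(W_j\). Failing that, the fallback is to let \(C_n\) take its point from the finite but possibly nonempty set \(C_n\cap W_j\) away from \(o\), which is compatible with the invariant. Making one of these two mechanisms work uniformly is the crux of the proof.
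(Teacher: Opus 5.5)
\paragraph{There is a genuine gap in the case \(o\in W(f,g)\).} Your scalar coder does work whenever \(o\notin W(f,g)\). Choosing \(\lambda\) with \((f_0+\lambda g_0)(o)\neq0\) keeps every curve through \(o\) out of \(H\), so the invariant costs only finitely many points on each later \(C_m\). Since \(t=f+\lambda g\in(f,g)\), you also get (T5) for free.

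The failure when \(o\in W(f,g)\) cannot be fixed by a cleverer multiplier: no coder lying in the ideal \((f,g)\) can exist there. Take \(f=x\), \(g=y\), so \(d=1\) and \(W\) is the \(z\)-axis.
\begin{itemize}
\item Every \(t\in(x,y)\) vanishes at \(o\). This covers your scalar \(\lambda\) and any \(v\in\m^N\), whatever its tangent cone.
\item By (T2), \(X\cap W\) is finite and does not contain \(o\). Choose a plane \(P\) through \(o\) that misses these finitely many points.
\item \(V(t)\cap P\) has dimension at least \(1\) at \(o\), so it contains a curve through \(o\).
\item By (T2) that curve meets \(X\), at a point of \(Z_X(t)\). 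But (T4) would place this point in \(X\cap W\cap P=\varnothing\).
\end{itemize}
So (T2) rules out any coder in \((x,y)\) for this pair, for every choice of \(X\). Your fallback also fails in this example. For \(H=V(x+\lambda y)\), every line through \(o\) in \(H\) other than the \(z\)-axis meets \(W\) only at \(o\), so \(C_n\cap W_j\setminus\{o\}\) is empty.

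\paragraph{The missing idea is to decouple (T4) from (T5) for these pairs.} Since \(o\in W\subseteq V(f,g)\) forces \(f,g\in\m\), property (T3) makes (T5) vacuous for them. So the coder need not lie in \((f,g)\).

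The paper takes \(t=d\,q\). The Chinese remainder theorem gives \(q(o)=1\) and, at each point already chosen, \(q=0\) exactly at those points lying in \(W\). Then \(V(q)\) contains no curve through \(o\), so it can be forbidden permanently.

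Later points must also avoid \(W\), and that would block the components of \(W\) through \(o\). The paper handles this by a delay: the pair is processed only at stage \(\nu(j)=\max\{j,\ell(j)\}\), after each such component has already received its point.

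Property (T5) is handled separately by an interpolated \(h=uf+vg\) with \(h(o)=1\), which plays the role of your scalar coder. With this change your construction would go through.
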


\begin{proof}
Enumerate the curves through \(o\) without repetition as
\[
 C_1,C_2,\ldots,
\]
enumerate the nonzero polynomials as \(a_1,a_2,\ldots\), and enumerate
all ordered pairs of polynomials as
\[
  (f_1,g_1),(f_2,g_2),\ldots.
  \tag{4.3}\label{eq:enumerations}
\]
For every index \(j\) with \(f_jg_j\neq0\), form the residual locus
\(W_j=W(f_j,g_j)\) from \eqref{eq:residual-locus}.  By
\cref{lem:elementary-geometry}(iv), the components of \(W_j\) that
contain \(o\) occur among finitely many of the \(C_i\).  Let
\(\ell(j)\) be the largest of their indices, with \(\ell(j)=0\) when
there is no such component, and set
\[
  \nu(j)=\max\{j,\ell(j)\}.
  \tag{4.4}\label{eq:activation}
\]
When \(f_jg_j=0\), define \(\nu(j)=j\).
For a fixed \(n\), only finitely many indices satisfy \(\nu(j)=n\),
because \(\nu(j)\geq j\).

We recursively choose \(p_n\in C_n\setminus\{o\}\) and maintain a
finite family \(\mathcal F_n\) of permanently forbidden closed sets.
The induction invariant is
\[
 \text{no member of \(\mathcal F_n\) contains \(C_r\) for any \(r>n\)}.
 \tag{4.5}\label{eq:invariant}
\]
Start with \(\mathcal F_0=\varnothing\).  Assume all choices through
stage \(n-1\) have been made.  On \(C_n\), avoid \(o\), every member
of \(\mathcal F_{n-1}\), and, for each \(j\leq n\) for which
\(C_n\nsubseteq V(a_j)\), avoid \(V(a_j)\).  By
\eqref{eq:invariant}, every member of \(\mathcal F_{n-1}\) meets
\(C_n\) properly; the same holds for each \(V(a_j)\) by the stated
condition \(C_n\nsubseteq V(a_j)\).  There are finitely many of these
sets, and each
intersection is finite by \cref{lem:elementary-geometry}(ii).
Since \(C_n(\kk)\) is infinite, a point \(p_n\) remains.  Add \(C_n\)
to the forbidden family.  Notice that the \(p_n\) are distinct:
after stage \(i\), every later point avoids \(C_i\), which contains
\(p_i\).

After choosing \(p_n\), we impose the following requirements.

\smallskip
\noindent\emph{Intersections of zero sets.}
For every \(j\) with \(\nu(j)=n\), do the following.  If
\(f_j=g_j=0\), set \(t_j=0\).  If exactly one is zero,
let \(t_j\) be the other polynomial.  Suppose now that both are
nonzero.  Write
\[
 f_j=d_jf_j',\qquad g_j=d_jg_j',
 \qquad \gcd(f_j',g_j')=1,
\]
and let \(W_j=V(f_j',g_j')\).  The maximal ideals of the distinct
points \(o,p_1,\ldots,p_n\) are pairwise comaximal.  The Chinese
remainder theorem therefore gives a polynomial \(q_j\in A\) such that
\[
  q_j(o)=1,\qquad
  q_j(p_i)=
  \begin{cases}
    0,&p_i\in W_j,\\
    1,&p_i\notin W_j
  \end{cases}
  \quad(1\leq i\leq n).
  \tag{4.6}\label{eq:q-interpolation}
\]
Set \(t_j=d_jq_j\), and add \(W_j\) and \(V(q_j)\) to the forbidden
family.

\smallskip
\noindent\emph{Linear-combination requirement.}
Consider the single pair \((f_n,g_n)\).  If \(f_n,g_n\in\m\), make no
additional choice.  If at least one is outside \(\m\), but the two
polynomials already have a common zero among \(p_1,\ldots,p_n\), again
make no additional choice.  Otherwise, at every point of
\(\{o,p_1,\ldots,p_n\}\), at least one of \(f_n,g_n\) is nonzero.
Set \(p_0=o\), and choose scalars \(\alpha_i,\beta_i\in\kk\) such that
\[
 \alpha_i f_n(p_i)+\beta_i g_n(p_i)=1.
\]
Apply the Chinese remainder theorem separately to the prescribed
values \(\alpha_i\) and \(\beta_i\).  This produces \(u_n,v_n\in A\)
for which
\[
 h_n=u_nf_n+v_ng_n
 \quad\text{satisfies}\quad
 h_n(o)=h_n(p_1)=\cdots=h_n(p_n)=1.
 \tag{4.7}\label{eq:h-interpolation}
\]
Add \(V(h_n)\) to the forbidden family.

This completes stage \(n\).  We verify the invariant.  The newly
forbidden curve \(C_n\) contains no different curve through \(o\).  The sets
\(V(q_j)\) and \(V(h_n)\) contain no curve through \(o\) because their
defining polynomials take the value \(1\) at \(o\).  If \(W_j\) is
added at stage \(n=\nu(j)\), every curve through \(o\) contained in \(W_j\)
is an irreducible component of \(W_j\), by
\cref{lem:elementary-geometry}(iv), and its index is at most
\(\ell(j)\leq n\).  Thus \(W_j\) contains no curve \(C_r\) with \(r>n\).
This proves \eqref{eq:invariant} and hence completes the recursion.

We now establish the five asserted properties.  Fix \(0\neq a_j\in A\).
By \cref{lem:elementary-geometry}(iii), infinitely many lines through \(o\)
are not contained in \(V(a_j)\).  Their indices in the enumeration of
the \(C_i\) are unbounded.  For every such index \(n\geq j\), the
choice rule gives \(a_j(p_n)\neq0\).  This proves property
\ref{T:dense}.  If \(C=C_j\), then \(p_j\in C\), while every
\(p_i\) with \(i>j\) avoids the permanently forbidden set \(C_j\).
Thus
\[
  \varnothing\neq X\cap C_j\subseteq\{p_1,\ldots,p_j\},
\]
which proves property \ref{T:thin}.

For property \ref{T:pair-hit}, let \(f,g\in\m\).  We first show that
some curve through \(o\) is contained in \(V(f,g)\).  Work in the
three-dimensional local domain \(A_\m\), and let \(P\) be a prime
minimal over \((f,g)A_\m\).  Krull's height theorem gives
\(\operatorname{ht}P\leq2\).  Contract \(P\) to a prime
\(\mathfrak p\subseteq\m\) of \(A\); localization preserves its height.
If \(\operatorname{ht}\mathfrak p=2\), then
\(V(\mathfrak p)\) is the required curve through \(o\).  If
\(\operatorname{ht}\mathfrak p=1\), the UFD property gives
\(\mathfrak p=(\pi)\) for an irreducible \(\pi\).  Choose
\(s\in\m\setminus\mathfrak p\), and in \(A_\m\) choose a prime minimal
over \((\pi,s)A_\m\).  Its contraction \(\mathfrak q\) is contained in
\(\m\) and strictly contains \(\mathfrak p\), so
\(\operatorname{ht}\mathfrak q\geq2\).  Krull's height theorem applied
to the two generators \(\pi,s\) gives
\(\operatorname{ht}\mathfrak q\leq2\).  Hence
\(\operatorname{ht}\mathfrak q=2\), and
\(V(\mathfrak q)\subseteq V(f,g)\) is a curve through \(o\).  Finally, if
\(\operatorname{ht}\mathfrak p=0\), then
\(\mathfrak p=0\); the injectivity \(A\hookrightarrow A_\m\) gives
\(f=g=0\), and \(V(x,y)\) is a curve through \(o\) in \(V(f,g)\).
By property \ref{T:thin}, the resulting curve through \(o\) meets \(X\),
giving a common zero of \(f,g\).  This proves property
\ref{T:pair-hit}.

To prove property \ref{T:coder}, consider the pair \((f_j,g_j)\),
processed at stage \(n=\nu(j)\).  The cases in which a polynomial
is zero are immediate.  In the remaining case,
\[
 Z_X(f_j)\cap Z_X(g_j)
 =Z_X(d_j)\cup\bigl(X\cap W_j\bigr).
 \tag{4.8}\label{eq:gcd-zero}
\]
Equation \eqref{eq:q-interpolation} gives
\(q_j(p_i)=0\) exactly when \(p_i\in W_j\) for \(i\leq n\).
Every later point avoids both \(W_j\) and \(V(q_j)\).  Consequently
\[
  Z_X(q_j)=X\cap W_j.
\]
Since \(t_j=d_jq_j\), \eqref{eq:gcd-zero} gives
\(Z_X(t_j)=Z_X(f_j)\cap Z_X(g_j)\), as required.

Finally, suppose \(f_j,g_j\) have no common zero on \(X\).
Property \ref{T:pair-hit} shows that they cannot both belong to \(\m\).
They had no common zero among the points present at stage \(j\), so
the recursion constructed \(h_j=u_jf_j+v_jg_j\) as in
\eqref{eq:h-interpolation}.  It is nonzero at the points already
chosen, and every later point avoids the permanently forbidden set
\(V(h_j)\).  Thus \(h_j\) is zero-free on \(X\), proving property
\ref{T:escape}.
\end{proof}

\section{The reduced example}\label{sec:example}

Let \(X\) be the set of \cref{prop:thin}.  By property \ref{T:dense},
restriction gives an injective homomorphism
\[
  \rho:A\longrightarrow\kk^X,\qquad f\longmapsto f|_X.
\]
Put \(B=\rho(A)\), \(\Omega=X\times\N\), \(D=\kk^{(\Omega)}\), and
\(\iota=\Delta\circ\rho\).  We take
\begin{equation}\label{eq:final-ring}
  R=\iota(A)+D\subseteq\kk^\Omega.
\end{equation}

\begin{proof}[Proof of \cref{thm:main}]
The ring \(R\) is reduced by \cref{prop:dictionary}(a).
Property \ref{T:coder} and \cref{prop:dictionary}(b) give (a.c.);
property \ref{T:escape} and \cref{prop:dictionary}(c) give \((A_2)\).

Consider the ideal
\[
  I=(\iota(x),\iota(y),\iota(z))R.
\]
Its generators have no common zero in \(\Omega\), since the only common
zero of \(x,y,z\) in \(\A^3_\kk\) is \(o\), which does not belong to
\(X\).  Thus \cref{prop:dictionary}(a) gives \(\Ann_R(I)=0\).

For an arbitrary \(q\in I\), write
\[
  q=\iota(x)r_1+\iota(y)r_2+\iota(z)r_3,
  \qquad r_i=\iota(f_i)+d_i,\quad f_i\in A,\ d_i\in D.
\]
Because \(D\) is an ideal, this has the form \(q=\iota(F)+d\), where
\[
  F=xf_1+yf_2+zf_3\in\m,\qquad d\in D.
\]
Property \ref{T:pair-hit}, applied to \((F,0)\), gives \(p\in X\)
with \(F(p)=0\).  Some coordinate of the infinite fiber
\(\{p\}\times\N\) lies outside the finite set \(\Supp(d)\), and \(q\)
vanishes there.  By \cref{prop:dictionary}(a), \(q\) is a zero-divisor.
We have proved
\begin{equation}\label{eq:bad-ideal}
  I\subseteq Z(R),\qquad \Ann_R(I)=0.
\end{equation}
This three-generated ideal witnesses the failure of \((A_n)\) for
every \(n\geq3\).  Since \((A_1)\) is automatic and \((A_2)\) holds,
\(R\) satisfies \((A_n)\) exactly for \(n=1,2\), and is not McCoy.

Finally, \(\kk=\overline{\mathbb Q}\), \(A\), and \(\Omega\) are
countable.  There are countably many finite subsets of \(\Omega\),
and countably many \(\kk\)-valued functions supported on each.
Hence \(D\), and therefore \(R=\iota(A)+D\), is countable.
\end{proof}

\renewcommand{\sectionname}{}
\section*{Acknowledgments}
We used GPT-5.6 Sol and an agentic harness built around GPT-5.6 Sol and Claude Fable 5 to assist with literature searches, hypothesis testing, the exploration and elimination of potential approaches, wording refinement, and manuscript proofreading. We thank Hieu M. Vu, Tho Tran Huu, Khoi M. N. Nguyen, Dung V. Nguyen, and Quang X. Nguyen for their assistance with hardware-related matters and for providing technical support in the use of the AI tools and agentic system. Viet-Hoang Tran thanks Thieu N. Vo for insightful discussions regarding the problem.

\bibliographystyle{plain}
\begingroup
\footnotesize
\raggedright
\bibliography{references}
\endgroup

\end{document}